\newtheorem{lemma}{Lemma}
\newtheorem{theorem}{Theorem}
\newcommand{\Z}{\mathbb{Z}}
\newcommand{\R}{\mathbb{R}}
\newcommand{\p}{\mathbb{P}}
\newcommand{\lilOh}[1]{\ensuremath{\mathit{o}\!\left( #1 \right)}}
\newcommand{\bigTheta}[1]{\ensuremath{\Theta\!\left(#1\right)}}
\newcommand{\eps}{\epsilon}
\newcommand{\La}{\mathbb{L}}
\newcommand{\e}[1]{\mathbb{E}\left[#1\right]}
\newcommand{\bigOh}[1]{\ensuremath{\mathcal{O}\!\left( #1 \right)}}
\DeclareMathOperator{\DIAM}{diam}
\newcommand{\diam}{\DIAM }
\newcommand{\ba}{\backslash}
\begin{document}

\title{Connectivity and giant component in random distance graphs}
\date{\today}
\author{Joshua Flynn}
\email{jflynn40@uw.edu}

\author{Briana Oshiro}
\email{bsoshiro@uw.edu}

\author{Mary Radcliffe}
\email{mradclif@andrew.cmu.edu}

\address{Department of Mathematics, University of Washington,
Seattle, WA 98195}

\begin{abstract}
Various different random graph models have been proposed in which the vertices of the graph are seen as members of a metric space, and edges between vertices are determined as a function of the distance between the corresponding metric space elements. We here propose a model $G=G(X, f)$, in which $(X, d)$ is a metric space, $V(G)=X$, and $\p(u\sim v) = f(d(u, v))$, where $f$ is a decreasing function on the set of possible distances in $X$. We consider the case that $X$ is the $n\times n \times \dots\times n$ integer lattice in dimension $r$, with $d$ the $\ell_1$ metric, and $f(d) = \frac{1}{n^\beta d}$, and determine a threshold for the emergence of the giant component and connectivity in this model. We compare this model with a traditional Waxman graph. Further, we discuss expected degrees of nodes in detail for dimension 2. 
 \end{abstract}
\maketitle

\section{Introduction}

The study of random graphs dates back to the work of Erd\H{o}s and Renyi in the late 1950s \cite{erdHos1960evolution,erdHos1959random}.
In particular, the transition of a random graph as a composition of mostly small components to one with a ``giant component'' to a connected graph has been studied extensively. These structural changes are known as phase transitions, and the two-step process above is sometimes referred to as the ``double-jump'' in a random graph. Beginning with the revolutionary work of Erd\H{o}s and R\'{e}nyi, phase transitions have been studied in a multitude of different settings (see, for example, \cite{bollobas2007phase, bollobas2012simple,ding2011anatomy,janson1993birth,janson2012phase, spencer2010giant}, among many others).  
Of the first pieces of work in the subject, Erd\H{os} and R\'{e}nyi published an analysis of component sizes and phase transitions in random graphs 
\cite{erdHos1960evolution,erdHos1959random}.

The first of the commonly studied random graphs is known as the Erd\H{o}s-R\'{e}nyi model. In this model, the number of vertices is denoted $n$ and the probability that two vertices are adjacent is denoted $p$, where each edge is included independently. Herein we use the notation $G(n, p)$ for this graph. Phase transition has been studied extensively in $G(n, p)$. For probability $np \lesssim c<1 $, this model contains only small components of size at most $\bigOh{\log n}$ asymptotically almost surely. For $ 1<c\lesssim np \lesssim \log n $,  there is a giant component of size on the order of $n$ asymptotically almost surely, and for $np \gtrsim \log n$, the graph is connected asymptotically almost surely (see, for example \cite{AlonandSpencer} for an analysis of component sizes in $G(n, p)$).

In this work, we study random graphs for which the vertices are embedded in a metric space, and edges are chosen based upon the distance between these vertices. Examples of graphs of this type in the literature are abound, such as random geometric graphs (see, for example, \cite{penrose2003random,balister2008percolation,mahadev1995threshold}), geographical threshold graphs (see, for example, \cite{bradonjic2007wireless,masuda2005geographical,bradonjic2007giant}), the Kleinberg small world model (see, for example, \cite{garfield1979its,kleinberg2000navigation}), Waxman models (see, for example, \cite{waxman1988routing,van2001paths,naldi2005connectivity}), among others. See \cite{avin2008distance} for a description of some of these types of graphs.

The ``randomness'' in such graphs is generally presented in one of two ways: either vertices are chosen randomly or the edges are chosen randomly. For random geometric graphs and geographical threshold graphs, the vertices are chosen randomly from an underlying metric space, and then a rule is devised to determine their adjacency; typically the adjacency is deterministic once the vertex set has been chosen. Often the metric space in question here is $\R^d$, although that is not strictly necessary. On the other hand, for the Kleinberg small world model, the vertices are fixed in the metric space, but the presence of edges is chosen randomly as a decreasing function of the distance between nodes. The Waxman model, seemingly uniquely among graphs of this type, chooses both the position of the vertices in the metric space and the edges randomly. 

In this work, we propose a graph model similar to a Kleinberg model or Waxman model. We consider a sequence of random graphs defined as follows. First, fix some metric space $(X, d)$, and take $X_1\subset X_2\subset X_3\subset\dots\subset X$ to be a sequence of sub-metric spaces of $X$, in which $|X_j|$ is finite for all $j$. For each $j$, let $f_j:[0, \diam{X_j})\to [0, 1]$ be a decreasing function. The graph $G(X_j, f_j)$ is defined by $V(G(X_j, f_j))=X_j$ and for any $u, v\in X_j$, $\p(u\sim v) = f_j(d(u, v))$. We refer to such a model in this work as a {\it random distance graph}.

To distinguish this model from the existing literature, we include an analysis of connectivity and other structures in the traditional Waxman model in Section \ref{S:Waxman}. Given an metric space $(X, d)$, together with a probability distribution $\mu$ over $X$, let $W(X, n, f)$ denote the traditional Waxman model over $X$ with $n$ vertices, wherein each vertex is embedded randomly in $X$ according to $\mu$, and the probability that two vertices are adjacent is given by $\p(u\sim v)=f(d(u, v))$. We prove the following.

\begin{theorem}\label{fixedms}
Let $X$ be a connected metric space with finite diameter $d_X$ and let $\mu$ be a probability distribution over $X$. Let $G=W(X, n, f_n)$. If there exists $\eps>0$ such that the functions $f_n$ satisfy the condition
\[
\frac{n}{\log n} f_n(d_X) > 1+\eps
\]
for $n$ sufficiently large, then the graph $G$ is connected a.a.s.
\end{theorem}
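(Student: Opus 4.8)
The plan is to reduce the statement to the classical Erd\H{o}s--R\'enyi connectivity threshold via stochastic domination. The key observation is that $f_n$ is decreasing and $d(u,v) \le d_X$ for every pair of vertices $u,v$, no matter how the $n$ vertices are embedded in $X$; hence, writing $p_n := f_n(d_X)$ (interpreted as $\inf_{d < d_X} f_n(d)$ if the diameter is not attained), we have the pointwise bound $\p(u \sim v \mid \text{positions}) = f_n(d(u,v)) \ge p_n$, uniformly over all embeddings.

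First I would set up a monotone coupling. Condition on the random positions of the vertices; the edges of $G$ are then present independently with marginals at least $p_n$, so assigning to each potential edge $e=uv$ an independent uniform $U_e\in[0,1]$ and declaring $e\in G$ iff $U_e\le f_n(d(u,v))$ and $e\in G'$ iff $U_e\le p_n$ yields a graph $G'$ with the law of $G(n,p_n)$ satisfying $G'\subseteq G$ on the common vertex set. Since connectivity is a monotone increasing property, $\{G' \text{ connected}\}\subseteq\{G \text{ connected}\}$, and since this holds for every embedding it holds unconditionally: $\p(G \text{ connected}) \ge \p(G(n,p_n) \text{ connected})$. Then I would invoke the hypothesis: $np_n = \frac{n}{\log n}f_n(d_X)\cdot\log n > (1+\eps)\log n$ for $n$ large, so $np_n - \log n > \eps\log n \to \infty$; by the standard connectivity threshold for $G(n,p)$ (e.g.\ \cite{AlonandSpencer}), $G(n,p_n)$ is connected a.a.s., and therefore so is $G$.

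I do not anticipate a real obstacle; the argument is essentially a one-line coupling once the pointwise bound is in hand. The only points needing care are (i) checking that the bound $f_n(d(u,v)) \ge f_n(d_X)$ is valid simultaneously for all embeddings---which it is, since $d_X$ is the diameter---so the coupling can be carried out after conditioning on positions; and (ii) confirming that the hypothesis $\frac{n}{\log n}f_n(d_X) > 1+\eps$ genuinely lands in the connected regime, which it does because it forces $np_n - \log n \to \infty$, comfortably past the sharp threshold at $np = \log n$. Note that connectedness of $X$ plays no role in this particular statement---only finiteness of the diameter and monotonicity of $f_n$ are used.
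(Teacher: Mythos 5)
Your argument is correct and is essentially the paper's proof: bound every edge probability below by $p_n=f_n(d_X)$ using monotonicity of $f_n$, dominate by $G(n,p_n)$, and invoke the Erd\H{o}s--R\'enyi connectivity threshold via monotonicity of connectedness (the paper packages the coupling you spell out as Lemma \ref{T:domconn}). Your extra care about conditioning on the random positions and about the diameter possibly not being attained are refinements of, not departures from, that approach.
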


We note that in practice, Waxman models are typically used with $X$ a finite volume subset of $\R^k$, and $f=f_n= \alpha e^{-\frac{d}{\beta}}$ with $\alpha, \beta \in (0,1]$. Note that an immediate corollary to the above theorem is that all traditional Waxman graphs are connected asymptotically almost surely.

In contrast, we have the following theorems regarding connectivity in random distance graphs. 

\begin{theorem}\label{T:conn}
Let $G_n=G(X_n,f_n)$. If there exists $\epsilon>0$ such that \[f_n(\diam X_n )>\frac{(1+\epsilon) \ln |X_n|}{|X_n|},\] then $G_n$ is connected a.a.s..
\end{theorem}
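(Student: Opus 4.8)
The plan is to reduce the statement to the classical Erd\H{o}s--R\'enyi connectivity threshold by a monotone coupling. The one fact that does the work is that $f_n$ is decreasing, so for \emph{every} pair $u,v\in X_n$ we have $\p(u\sim v)=f_n(d(u,v))\ge f_n(\diam X_n)$, a bound that is uniform over all pairs. Writing $N=|X_n|$ (which tends to infinity as $n\to\infty$) and $p_n=\frac{(1+\epsilon)\ln N}{N}$, the hypothesis says precisely that every edge of $G_n$ appears independently with probability at least $p_n$, and $p_n<1$ for $N$ large. So the first step is to couple the edge indicators of $G_n$ with those of an Erd\H{o}s--R\'enyi graph $G(N,p_n)$ on the vertex set $X_n$ in the standard way, obtaining $G(N,p_n)\subseteq G_n$ as spanning subgraphs.

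Next, since connectivity is a monotone increasing graph property, it suffices to show $G(N,p_n)$ is connected a.a.s. This is the classical Erd\H{o}s--R\'enyi result (see, e.g., \cite{AlonandSpencer}): for $p=\frac{(1+\epsilon)\ln N}{N}$ with $\epsilon>0$ fixed, $G(N,p)$ is connected a.a.s. as $N\to\infty$. Composing with the coupling finishes the proof.

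If a self-contained argument is preferred, the same uniform bound feeds directly into a first-moment computation: a disconnected graph has a set $S$ with $1\le |S|\le \floor{N/2}$ and no edges leaving it, so
\[
\p(G_n\text{ disconnected})\ \le\ \sum_{k=1}^{\floor{N/2}}\binom{N}{k}\bigl(1-f_n(\diam X_n)\bigr)^{k(N-k)}\ \le\ \sum_{k=1}^{\floor{N/2}}\binom{N}{k}e^{-f_n(\diam X_n)\,k(N-k)}.
\]
Using $f_n(\diam X_n)>\frac{(1+\epsilon)\ln N}{N}$ and $\binom{N}{k}\le N^k$, the terms with $k\le \delta N$ (for a small constant $\delta=\delta(\epsilon)$) form a convergent series tending to $0$; for $\delta N<k\le\floor{N/2}$ one instead uses $\binom{N}{k}\le 2^N$ against an exponent $f_n(\diam X_n)\,k(N-k)=\bigTheta{N\ln N}$, which dominates.

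I do not expect a real obstacle here: the content is essentially the classical $G(n,p)$ threshold, and the only things to check carefully are that the coupling is legitimate (edges are independent and connectivity is monotone) and that $|X_n|\to\infty$ so that the threshold applies. In the self-contained version the one mildly delicate point is the intermediate range of $|S|$, where $\binom{N}{k}$ is exponentially large in $N$ but is beaten by the no-edge probability's exponent of order $N\ln N$.
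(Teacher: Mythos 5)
Your argument is correct and is essentially the paper's own proof: both use the monotonicity of $f_n$ to get the uniform lower bound $\p(u\sim v)\ge f_n(\diam X_n)$, so that $G_n$ dominates an Erd\H{o}s--R\'enyi graph with edge probability exceeding $\frac{(1+\epsilon)\ln|X_n|}{|X_n|}$, and then invoke the classical connectivity threshold together with monotonicity of connectedness (the paper's Lemma \ref{T:domconn}). The extra self-contained first-moment computation is a fine optional addition but not needed.
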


\begin{theorem}\label{T:disconn}
Let $(X_n)$ be a sequence of nested finite metric spaces with metric $\rho$, and let $\rho_n=\diam X_n$. Let $G_n=G(X_n,f_n)$. For each $n, d>0$, define \[a_n(d) = \sup_{v\in X_n}\left|\left\{u\in X_n\ | \ \rho(u, v)=d\right\}\right|.\] If there exists $\alpha>0$ such that
\[
f_n(d)\leq\frac{|X_n|^{-\alpha}}{a_n(d)\rho_n}
\]
for all $d>0$ and $n$ sufficiently large, then there exists $\epsilon>0$ such that with probability at least $1-n^{-\epsilon}$, $G_n$ has $|X_n|(1-o(1))$ isolated vertices.
\end{theorem}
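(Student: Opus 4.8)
The plan is a first-moment argument: show that the expected number of non-isolated vertices is a vanishing fraction of $|X_n|$, and then conclude with Markov's inequality. Write $N=|X_n|$, and for $v\in X_n$ let $I_v$ be the indicator that $v$ is isolated in $G_n$. Since edges appear independently, $\e{I_v}=\prod_{u\in X_n\setminus\{v\}}\bigl(1-f_n(\rho(u,v))\bigr)$, and by the elementary inequality $\prod_i(1-x_i)\ge 1-\sum_i x_i$ for $x_i\in[0,1]$ (valid since $f_n$ takes values in $[0,1]$), it suffices to control $S_v:=\sum_{u\neq v}f_n(\rho(u,v))$ uniformly in $v$.

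To estimate $S_v$, I would group the sum by distance value. For each realized distance $d>0$ there are at most $a_n(d)$ vertices $u$ with $\rho(u,v)=d$, so that block contributes at most $a_n(d)f_n(d)\le \dfrac{N^{-\alpha}}{\rho_n}$ by hypothesis (blocks with $a_n(d)=0$ contribute nothing, so the hypothesis is only ever applied to distances that actually occur). Using that $X_n$ has at most $\rho_n$ distinct positive pairwise distances (as holds whenever $\rho$ is integer-valued, in particular for the lattice), there are at most $\rho_n$ nonzero blocks, whence $S_v\le\rho_n\cdot\dfrac{N^{-\alpha}}{\rho_n}=N^{-\alpha}$ for all $v$ and all large $n$. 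Therefore $\e{I_v}\ge 1-N^{-\alpha}$, and by linearity the expected number of non-isolated vertices satisfies $\e{\sum_v(1-I_v)}\le N^{1-\alpha}$.

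Finally, applying Markov's inequality to the nonnegative random variable $Z:=\sum_v(1-I_v)$ gives $\p\bigl(Z\ge N^{1-\alpha/2}\bigr)\le N^{1-\alpha}/N^{1-\alpha/2}=N^{-\alpha/2}$, so with probability at least $1-N^{-\alpha/2}$ the graph $G_n$ has at least $N-N^{1-\alpha/2}=N\bigl(1-N^{-\alpha/2}\bigr)$ isolated vertices; since $N=|X_n|\to\infty$ this is $|X_n|(1-o(1))$, and since $|X_n|$ grows at least polynomially in $n$ in the settings of interest (e.g.\ $|X_n|=n^r$ for the lattice), $N^{-\alpha/2}\le n^{-\epsilon}$ for a suitable $\epsilon>0$. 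I do not expect a genuine obstacle here; the one step needing care is the bound on $S_v$, where one uses that the number of distinct realized distances in $X_n$ is at most $\rho_n$ so that the diameter in the denominator of the hypothesis is exactly absorbed, and where the hypothesis must be invoked only at distances that are actually attained.
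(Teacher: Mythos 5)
Your proposal is correct and follows essentially the same first-moment argument as the paper: bound the per-vertex sum of edge probabilities by grouping vertices by distance and using the hypothesis to get $|X_n|^{-\alpha}$, then apply Markov's inequality to the number of non-isolated vertices. The only cosmetic difference is that you bound the isolation probability via $1-\prod(1-p)\le\sum p$, whereas the paper applies Markov to the degree ($\p(\deg(v)>\tfrac12)\le 2\,\e{\deg(v)}$); both proofs also share the same implicit assumption (which you flag explicitly) that there are at most $\rho_n$ distinct positive distances, and the same mild gap in passing from a bound in terms of $|X_n|$ to the stated $1-n^{-\epsilon}$.
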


We note that there is bit of difference in the size of $f_n$ in Theorems \ref{T:conn} and \ref{T:disconn}. However, we have a precise threshold in the case that $(X, d)$ is the $r$-dimensional integer lattice, under the $\ell_1$ metric, and 
\[ X_n = \{(a_1, a_2, \dots, a_n)\in X\ | \ 0\leq a_i\leq n-1 \hbox{ for all }i\};\]that is, $X_n$ is a $n\times n\times\dots\times n$-sized subset of $\Z^r$. We typically write $L_n^r$ to denote this metric space.

Using this metric space, we can view adjacency between nodes of $X_n$ as a function of the difference between corresponding coordinates in the vector representing the node. Examples of graphs defined in a similar way include stochastic Kronecker graphs (see, for example, \cite{mahdian2007stochastic,leskovec2010kronecker,radcliffe2013connectivity}), multiplicative attribute graphs (see, for example, \cite{kim2011modeling,kim2012multiplicative}), and random dot product graphs (see, for example, \cite{scheinerman2010modeling, nickel2007random, young2007random}) . However, in these cases, the specific values of the coordinates is taken into account in determining the probability that two nodes are adjacent, whereas in this model, the only determining factor is the difference between corresponding coordinates.

In this particular case, we obtain the following result, which closes the gap between Theorems \ref{T:conn} and \ref{T:disconn}.

\begin{theorem}\label{T:main}
Let $f_n(d) = \frac{1}{n^\beta d}$, where $\beta\in \R$. Fix $r>0$, and let $G=G(L_n^r, f_n)$. Then
\begin{enumerate}
\item if $\beta<r-1$, then $G$ is connected a.a.s., and
\item  if $\beta>r-1$, then $G$ is disconnected a.a.s., and, moreover, $G$ has $n^r(1-\lilOh{1})$ isolated vertices.
\end{enumerate}
\end{theorem}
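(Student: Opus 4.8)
\emph{Setup and Part (1).} First record the two parameters of $L_n^r$ that the general theorems use: it has $|X_n| = n^r$ vertices and $\ell_1$-diameter $\rho_n := \diam L_n^r = r(n-1)$. Part (1) is then immediate from Theorem~\ref{T:conn}: since $f_n$ is decreasing, its smallest value on $X_n$ is $f_n(\rho_n) = \frac{1}{n^\beta r(n-1)} = \bigTheta{n^{-\beta-1}}$, whereas $\frac{\ln|X_n|}{|X_n|} = \frac{r\ln n}{n^r}$. When $\beta < r-1$ the ratio of the former to the latter is $\bigTheta{n^{\,r-1-\beta}/\ln n}\to\infty$, so $f_n(\rho_n) > \frac{2\ln|X_n|}{|X_n|}$ for $n$ large, and Theorem~\ref{T:conn} (with $\eps=1$) yields connectivity a.a.s.

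\emph{Part (2), $r\ge 2$.} The plan is to verify the hypothesis of Theorem~\ref{T:disconn}. The only new ingredient is a bound on $\ell_1$-spheres in the box: writing $a_n(d) = \sup_{v\in X_n}|\{u\in X_n : \rho(u,v)=d\}|$ as in that theorem, a slicing argument (fix the first $r-1$ coordinates of a point at distance $d$ from $v$; the last coordinate is then determined up to two values) gives both $a_n(d)\le 2n^{r-1}$ and $a_n(d)\le 2(2d+1)^{r-1}$, hence $a_n(d)\le C_r\min(d,n)^{r-1}$ for a constant $C_r=C_r(r)$ and every $d\ge 1$. Now pick $\alpha := \frac{\beta-(r-1)}{2r} > 0$, so that $\beta - r\alpha = \frac{\beta+r-1}{2} > r-1$. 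The required inequality $f_n(d) = \frac{1}{n^\beta d}\le\frac{|X_n|^{-\alpha}}{a_n(d)\rho_n}$ is equivalent to $a_n(d)\rho_n\le n^{\beta-r\alpha}d$, and using $\rho_n < rn$ together with the sphere bound one checks that, in both ranges $1\le d\le n$ and $n < d\le\rho_n$, this reduces to $C_r r\, n^{r-1}\le n^{\beta-r\alpha}$, which holds for all large $n$ since $\beta - r\alpha > r-1$. Theorem~\ref{T:disconn} then produces $n^r(1-\lilOh{1})$ isolated vertices with probability at least $1-n^{-\eps}$, so $G$ is disconnected a.a.s.

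\emph{Part (2), $r=1$.} For $0<\beta\le 1$ the hypothesis of Theorem~\ref{T:disconn} is not met — there $\rho_n/d$ can be of order $n$ while $a_n(d)\le 2$, leaving no room for the factor $|X_n|^{-\alpha}$ — so this case needs a direct argument, one which in fact works for every $r$. For any vertex $v$, $\e{\deg v} = \sum_{d\ge 1} a_n^v(d) f_n(d)$ with $a_n^v(d) = |\{u : \rho(u,v)=d\}|$, and feeding in the sphere bound gives $\e{\deg v} = \bigOh{n^{\,r-1-\beta}}$ for $r\ge 2$ and $\bigOh{n^{-\beta}\ln n}$ for $r=1$; in either case $\beta > r-1$ forces $\e{\deg v} = \lilOh{1}$. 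Hence $\p(v\text{ is not isolated})\le\e{\deg v} = \lilOh{1}$, so the expected number of non-isolated vertices is $\lilOh{n^r}$, and Markov's inequality shows that a.a.s.\ all but $\lilOh{n^r}$ vertices are isolated; in particular $G$ is disconnected a.a.s.

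\emph{Main obstacle.} The argument is not deep, but two points require care. First, for $r\ge 2$ one must use the $d$-dependent estimate $a_n(d) = \bigOh{d^{r-1}}$ rather than the crude $a_n(d)\le n^{r-1}$: the latter would only give disconnectivity for $\beta > r$, missing precisely the interval $(r-1,\,r]$ the theorem is after. Second, the endpoint $r=1$ (and, more generally, the behaviour of $a_n(d)$ for $d$ near $\rho_n$) lies outside the scope of Theorem~\ref{T:disconn} and has to be handled by hand, which the uniform first-moment computation above does cleanly.
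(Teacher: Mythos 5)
Your proposal is correct, and while Part (1) coincides with the paper's argument (dominate by an Erd\H{o}s--R\'enyi graph with $p=f_n(r(n-1))$ and apply Lemma \ref{T:domconn}/Theorem \ref{T:conn}), your Part (2) takes a genuinely different route. The paper does not invoke Theorem \ref{T:disconn} at all: it embeds $G(L_n^r,f_n)$ in the infinite-lattice graph $G(\Z^r,g_n)$, derives the recursion $a_{r+1}(d)=2\sum_{k=0}^{d-1}a_r(k)+a_r(d)$ with $a_2(d)=4d$ (Lemma \ref{L:ard}), proves $\e{\deg(v)}=\bigOh{n^{r-1-\beta}}$ by induction on $r$ (Lemma \ref{expecteddegree}), and then runs the two-fold Markov argument directly, explicitly restricting to $r\geq 2$. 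You instead verify the hypothesis of the general Theorem \ref{T:disconn} using the elementary sphere bound $a_n(d)\leq C_r\min(d,n)^{r-1}$ with $\alpha=\frac{\beta-(r-1)}{2r}$ — your checks in the ranges $d\leq n$ and $n<d\leq\rho_n$ are correct, and your observation that the crude bound $a_n(d)\leq 2n^{r-1}$ would only reach $\beta>r$ is exactly why the $d$-dependent estimate is needed — and you handle $r=1$ (where, as you correctly note, the hypothesis of Theorem \ref{T:disconn} fails for $\beta\leq 1$) by a direct first-moment computation giving $\e{\deg(v)}=\bigOh{n^{-\beta}\ln n}$; that same computation recovers the paper's $\bigOh{n^{r-1-\beta}}$ bound for all $r\geq 2$ without any induction. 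What each approach buys: the paper's recursive lemma is reused for the exact degree analysis in Section \ref{S:degreetwo}, whereas your route is shorter, leans on the already-proved general disconnection theorem, and covers the $r=1$ case that the paper's Part 2 proof leaves aside; the only cosmetic caveat is that your $r=1$ argument as stated yields the isolated-vertex count a.a.s.\ rather than with the explicit $1-n^{-\eps}$ rate of the restated Part 2, which is all that Theorem \ref{T:main} itself requires.
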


This behavior is striking in that we do not see the typical ``double-jump'' between a giant component and a connected graph when $\beta$ is constant. Instead, the graph goes from having no large component directly to being connected. It seems likely that a more nuanced approach to choosing $\beta$ as a function of $n$ may identify a double jump here.

The approach to the proof of Theorem \ref{T:main} involves an approximation of the expected degree of each vertex, obtained by first expanding the vertex set to the infinite lattice $\Z^r$, and then considering an appropriately chosen subset. We use an approximation for the expected degree, but also include a proof of the precise expected degree in dimension $2$ for comparison in Section \ref{S:degreetwo}. 

We note also that this choice of $f_n$ is designed to keep the graph sparse even as $n\to \infty$, in keeping with expectations for small world models (see, for example, \cite{humphries2008network, easley2010networks}). We shall see as a corollary that the density of the graph in fact tends to 0 as $n\to\infty$.

\section{Tools and notation}\label{S:notation}

Throughout this paper, we use standard graph theoretic notation and terminology. Our primary language is defined below; we refer the reader to \cite{bollobas1982graph} for any terminology not herein defined.

Let $G=(V, E)$ be a graph. Given a vertex $v\in V$, define $\deg_G(v)$ to be the degree of $v$ in $G$. If $G$ is understood, we shall write $\deg(v)$ for brevity. We write $u\sim v$ to indicate that $u$ is adjacent to $v$. A graph $G$ is connected if for any two vertices $u, v\in V$, there is a path between $u$ and $v$. A maximal connected subgraph of $G$ is called a connected component, or simply a component of $G$. A graph family $G_n$ is said to have a giant component if there exists a connected component containing $\bigTheta{|V(G_n)|}$ vertices of $G_n$ for each $n$.

A graph $G$ is called {vertex transitive} if for every $u, v\in V(G)$, there exists a function $\phi:V(G)\to V(G)$ such that $\phi(u)=v$, and $x\sim y$ if and only if $\phi(x)\sim \phi(y)$; that is, there is a homomorphism that sends any vertex in $G$ to any other. 

Throughout, we shall focus on graphs with a fixed vertex set and randomly generated edges, where the set of edges are mutually independent. Given a sequence of random graphs $\{G_n\}$, we say that $G_n$ has a property P asymptotically almost surely (a.a.s.) if $\p(G_n\hbox{ has P})\to 1$ as $n\to \infty$. A graph property P is called {\it monotonic} if, whenever $H=(V, E')$, $E'\subset E$, has P and $H$ is a subgraph of $G$, then $G$ also has P; that is, the property is preserved if additional edges are added to the graph. 

Let $G_1$ and $G_2$ be random graphs with $V(G_1)=V(G_2)=V$. We say that $G_2$ {\it dominates} $G_1$ if for all $u, v\in V$, $\p_{G_2}(u\sim v)\geq \p_{G_1}(u\sim v)$. We note the following lemma for monotone graph properties, which is a standard exercise in random graphs (see, for example, \cite{friedgut1996every, erdHos1960evolution}):

\begin{lemma}\label{dominated graphs and connectivity}
Let $G_{1}, G_{2}$ be random graphs with the same vertex set such that $G_2$ dominates $G_1$. If P is a monotone graph property, and $G_1$ has P a.a.s., then $G_2$ also has P a.a.s..
\end{lemma}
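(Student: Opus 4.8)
The plan is to use the standard monotone coupling between $G_1$ and $G_2$ that realizes $G_1$ as a subgraph of $G_2$ with probability $1$, and then invoke the monotonicity of P together with the hypothesis that $G_1$ has P a.a.s. First I would set up the coupling. For each unordered pair $\{u,v\}$ with $u,v\in V$, write $p_1(u,v)=\p_{G_1}(u\sim v)$ and $p_2(u,v)=\p_{G_2}(u\sim v)$; by the domination hypothesis, $p_1(u,v)\le p_2(u,v)$. Introduce a family of independent random variables $U_{uv}\sim\mathrm{Unif}[0,1]$, one for each pair, and on this common probability space define $H_1$ by including the edge $\{u,v\}$ exactly when $U_{uv}\le p_1(u,v)$, and $H_2$ by including $\{u,v\}$ exactly when $U_{uv}\le p_2(u,v)$. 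Because the $U_{uv}$ are mutually independent and each edge indicator depends on only a single $U_{uv}$, the graph $H_i$ has the same distribution as $G_i$ for $i=1,2$; this is precisely the independent-edge setting assumed in Section \ref{S:notation}.

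Next I would observe that the coupling is nested. Since $p_1(u,v)\le p_2(u,v)$ for every pair, the event $\{U_{uv}\le p_1(u,v)\}$ is contained in $\{U_{uv}\le p_2(u,v)\}$, so every edge present in $H_1$ is also present in $H_2$. Hence $H_1$ is a subgraph of $H_2$ with probability $1$. By the definition of a monotone property, whenever $H_1$ has P so does $H_2$, and therefore the event $\{H_1\text{ has P}\}$ is contained in the event $\{H_2\text{ has P}\}$. Taking probabilities, and using that $H_i$ is distributed as $G_i$, yields $\p(G_2\text{ has P})\ge\p(G_1\text{ has P})$.

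Finally, the a.a.s. conclusion follows by passing to the limit along the underlying sequence. Applying the coupling separately at each index $n$ gives $\p(G_2\text{ has P})\ge\p(G_1\text{ has P})$ for every $n$; since the right-hand side tends to $1$ by hypothesis and every probability is at most $1$, a squeeze forces $\p(G_2\text{ has P})\to 1$, so $G_2$ has P a.a.s. There is no genuine obstacle in this argument; the only point requiring care is verifying that the single-uniform construction reproduces the correct edge marginals for each graph, which is immediate from the mutual independence of the edges assumed for this class of random graphs.
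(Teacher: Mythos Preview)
Your coupling argument is correct and is precisely the standard proof: build both graphs from a common family of independent uniforms so that $H_1\subset H_2$ almost surely, then use monotonicity of P and pass to the limit. The paper does not actually supply a proof of this lemma---it cites it as a standard exercise---so your proposal fills that gap with exactly the expected argument.
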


We denote by $G(m,p)$ the Erd\H{o}s-R\'{e}nyi graph with $m$ vertices, such that the probability that any two vertices are adjacent is $p$. We recall the following classical result on connectivity in $G(m, p)$ (see, for example, \cite{AlonandSpencer}).

\begin{theorem}\label{T:Gnpconn}
Let $G=G(m, p)$. If there exists $\eps>0$ such that $p>\frac{(1+\eps)\ln m}{m}$, then $G$ is connected a.a.s.. On the other hand, if there exists $\eps>0$ such that $p<\frac{(1-\eps)\ln m}{m}$, then $G$ is disconnected a.a.s..
\end{theorem}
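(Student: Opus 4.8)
The plan is to prove the two halves by complementary methods: a first-moment union bound over vertex cuts for the connectivity claim, and the second moment method applied to isolated vertices for the disconnectivity claim. Since connectivity is a monotone graph property and $G(m, p')$ dominates $G(m, p)$ whenever $p'\geq p$, Lemma~\ref{dominated graphs and connectivity} lets me assume without loss of generality, in the first half, that $p = \frac{(1+\eps)\ln m}{m}$, which in particular tends to $0$.

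For the connectivity direction, I would start from the observation that $G$ is disconnected exactly when there is a vertex set $S$ with $1\leq |S|\leq m/2$ having no edge joining $S$ to its complement. A union bound over all such $S$ gives
\[
\p(G\text{ is disconnected})\leq \sum_{k=1}^{\lfloor m/2\rfloor}\binom{m}{k}(1-p)^{k(m-k)}.
\]
I would then bound each term using $1-p\leq e^{-p}$ and $\binom{m}{k}\leq (em/k)^k$, and show the whole sum is $\lilOh{1}$. The $k=1$ term equals $m(1-p)^{m-1}\leq m\,e^{-p(m-1)} = m^{-\eps+\lilOh{1}}\to 0$, which captures the dominant obstruction (isolated vertices); the remaining effort is to show the terms for $2\leq k\leq m/2$ also sum to $\lilOh{1}$.

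For the disconnectivity direction, fix any $p<\frac{(1-\eps)\ln m}{m}$ and let $X$ count the isolated vertices of $G$. Since an isolated vertex forces $G$ to be disconnected once $m\geq 2$, it suffices to prove $X>0$ a.a.s. I would first compute $\e X = m(1-p)^{m-1}$ and, using $\ln(1-p) = -p(1+\lilOh{1})$ as $p\to 0$, show $\e X\geq m^{\eps/2}\to\infty$ for $m$ large. To upgrade this to $X>0$ a.a.s., I would apply the second moment method: writing $X=\sum_v \mathbbm{1}[v\text{ isolated}]$, the only dependence between two such indicators comes from the single shared potential edge between the two vertices, which yields $\mathrm{Cov}=p(1-p)^{2m-3}$ for each pair and hence $\frac{\mathrm{Var}(X)}{(\e X)^2}\leq \frac{1}{\e X}+\frac{p}{1-p}\to 0$. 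Chebyshev's inequality then gives $\p(X=0)\to 0$.

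The main obstacle is the middle range of the connectivity union bound: for $k$ of order $m$ the coefficient $\binom{m}{k}$ is exponentially large, and one must verify that the factor $(1-p)^{k(m-k)}$ decays fast enough to dominate it uniformly across all $2\leq k\leq m/2$. I expect to handle this by splitting the range (e.g.\ bounded $k$, then $k$ up to $m/\ln m$, then $k$ up to $m/2$) and checking term-by-term geometric-type decay in each piece, so that the full sum remains $\lilOh{1}$.
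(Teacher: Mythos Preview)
Your proposal outlines the standard textbook proof of the Erd\H{o}s--R\'enyi connectivity threshold, and the approach is correct. However, there is nothing to compare it against: the paper does not prove this statement at all. Theorem~\ref{T:Gnpconn} is stated in Section~\ref{S:notation} as a classical result, with the proof deferred entirely to the reference \cite{AlonandSpencer}. It is used as a black box (via Lemma~\ref{T:domconn}) in the proofs of Theorems~\ref{fixedms}, \ref{T:conn}, and \ref{T:main}.

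So your plan is fine as a self-contained proof of the cited theorem, and it is essentially the argument one finds in Alon--Spencer: union bound over cuts for connectivity, second moment on isolated vertices for disconnectivity. But within this paper the correct ``proof'' is simply the citation.
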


For our purposes, the monotone graph property of greatest interest is the property of a graph being connected a.a.s.. By combining Lemma \ref{dominated graphs and connectivity} with Theorem \ref{T:Gnpconn}, we have the following immediate result:

\begin{lemma}\label{T:domconn}
Let $G$ be a random graph on $m$ vertices. If there exists $\eps>0$ such that for all $u, v\in V(G)$, $\p(u\sim v)> \frac{(1+\eps)\ln m}{m}$, then $G$ is connected a.a.s..
\end{lemma}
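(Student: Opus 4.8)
The plan is to derive this directly from the two results just quoted, by sandwiching $G$ between an Erd\H{o}s--R\'enyi graph that is already known to be connected and $G$ itself. First I would fix a parameter $p$ equal to the uniform lower bound appearing in the hypothesis: concretely, set $p = \frac{(1+\eps)\ln m}{m}$, identify the vertex set of $G(m,p)$ with $V(G)$, and observe that for every pair $u,v\in V(G)$ we have $\p_G(u\sim v) > p = \p_{G(m,p)}(u\sim v)$, so $G$ dominates $G(m,p)$ in the sense of Section~\ref{S:notation}.

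Next I would check that $G(m,p)$ is connected a.a.s. Since $p = \frac{(1+\eps)\ln m}{m} > \frac{(1+\eps/2)\ln m}{m}$, Theorem~\ref{T:Gnpconn} applies with the constant $\eps/2$ in place of $\eps$, giving that $G(m,p)$ is connected a.a.s.

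Finally, connectivity is a monotone graph property, since adding edges to a connected graph leaves it connected. Hence Lemma~\ref{dominated graphs and connectivity}, applied to the pair $G(m,p)$ (which has the property a.a.s.) and $G$ (which dominates it), yields that $G$ is connected a.a.s., completing the proof.

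There is no genuine obstacle here: the statement is a formal composition of Lemma~\ref{dominated graphs and connectivity} and Theorem~\ref{T:Gnpconn}. The only points requiring any care are the strict-versus-nonstrict inequalities, namely that one must leave a little room (e.g., by passing from $\eps$ to $\eps/2$) so that the hypothesis of Theorem~\ref{T:Gnpconn} is genuinely met, and that domination only requires $\p_G(u\sim v)\ge p$, which the strict bound in the hypothesis certainly provides.
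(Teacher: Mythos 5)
Your proof is correct and is exactly the argument the paper has in mind: the paper states the lemma as an immediate consequence of combining Lemma~\ref{dominated graphs and connectivity} (domination preserves monotone properties, connectivity being monotone) with Theorem~\ref{T:Gnpconn} applied to the dominated Erd\H{o}s--R\'enyi graph. Your extra care with the strict inequality (passing to $\eps/2$) is a fine touch but does not change the substance.
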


As above, for a metric space $(X, d)$ and a function $f:\R^+\to [0, 1]$, let $G=G(X, f)$ be the random graph with $V(G)=X$ and $\p(u\sim v) = f(d(u, v))$ for all $u, v\in V$. We refer to this graph as a {\it random distance graph}. We shall use the notation $\Z^r$ and $L_n^r$ as defined in the introduction.

As for probabilistic tools, we shall require nothing more complex than Markov's Inequality, included here for completeness.

\begin{theorem}[Markov's Inequality]
Let $X$ be a random variable with $X\geq 0$. Then for all $a>0$, we have
\[\p(X>a)\leq \frac{\e{X}}{a}.\]
\end{theorem}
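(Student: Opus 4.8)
The plan is to prove the inequality by comparing $X$ pointwise against a suitably scaled indicator random variable and then invoking monotonicity and linearity of expectation. Fix $a>0$, and introduce the indicator $\mathbbm{1}[X>a]$, equal to $1$ on the event $\{X>a\}$ and $0$ otherwise. The crux of the argument is the pointwise bound
\[
a\,\mathbbm{1}[X>a]\leq X,
\]
valid precisely because $X\geq 0$. Indeed, on the event $\{X>a\}$ the left-hand side equals $a$ while the right-hand side exceeds $a$, and on the complementary event the left-hand side equals $0$ while the right-hand side is nonnegative; so the inequality holds in either case.

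With the pointwise bound in hand, I would take expectations of both sides. Since expectation is monotone, this yields
\[
\e{a\,\mathbbm{1}[X>a]}\leq \e{X}.
\]
By linearity the left-hand side equals $a\,\e{\mathbbm{1}[X>a]}$, and the expectation of an indicator is the probability of its defining event, so $\e{\mathbbm{1}[X>a]}=\p(X>a)$. Combining these identities gives $a\,\p(X>a)\leq \e{X}$, and dividing through by $a>0$ produces the claimed inequality $\p(X>a)\leq \e{X}/a$.

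The argument is entirely elementary, so I anticipate no genuine obstacle; the one step requiring care is the verification of the pointwise inequality, which is exactly where the hypothesis $X\geq 0$ enters (without it the bound can fail on $\{X\leq a\}$, where $X$ could be negative). Should one wish to avoid indicator variables, the same conclusion follows by splitting the integral defining $\e{X}$: writing $\e{X}=\int_0^\infty x\,dF(x)$ and discarding the nonnegative contribution from $[0,a)$ gives $\e{X}\geq \int_a^\infty x\,dF(x)\geq a\int_a^\infty dF(x)=a\,\p(X\geq a)\geq a\,\p(X>a)$, which again rearranges to the stated bound.
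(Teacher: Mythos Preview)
Your argument is correct and is in fact the standard textbook proof of Markov's inequality. Note, however, that the paper does not actually prove this statement: it is merely quoted as a classical tool (``included here for completeness'') with no accompanying proof, so there is nothing in the paper to compare your approach against.
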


Throughout, we shall use the standard asymptotic notations of $\bigOh{\cdot}$, $\lilOh{\cdot}$, $\ll$, $\gg$, etc. We refer the reader to \cite{cormen2009introduction} for a full formal definition of these notations. Asymptotics will always be considered with respect to $n$; for example, if we write $f(n, r)=\bigOh{g(n, r)}$, it is implied that $r$ is to be held constant and the limit to be considered as $n\to\infty$.

\section{Connectivity in a traditional Waxman model}\label{S:Waxman}

Recall the definition of $W(X, n, f)$ as given in the introduction to be a Waxman graph over a metric space $(X, d)$, where vertices are embedded randomly in $X$ according to some probability distribution $\mu$, and $\p(u\sim v)=f(u, v)$. Waxman graphs have been used to generate models of random networks for modeling systems such as the Internet graph and various biological networks \cite{faloutsos1999power,calvert1997modeling}. The most traditional version of a Waxman graph is formed by taking the underlying metric space as $X=[0,1]^r$, a subset of $\R^r$ under the $\ell^2$ metric, and the distribution $\mu$ to be uniform over $X$. The function $f_n$ is typically chosen to be constant with respect to $n$, with $f_n(d)=f(d)=\alpha e^{-\frac{d}{\beta}}$ with $\alpha, \beta \in (0,1]$. It is commonly known, though no formal proof has been presented to our knowledge, that the graph $W(X, n, f)$ is connected a.a.s.. In fact, we can extend this even to the case that $f_n$ is not constant with respect to $n$, as follows.

\begin{theorem}\label{fixedms}
Let $X$ be a connected metric space with finite diameter $\rho_X$ and let $\mu$ be a probability distribution over $X$. Let $G=W(X, n, f_n)$. If there exists $\eps>0$ such that the functions $f_n$ satisfy the condition
\[
\frac{n}{\log n} f_n(\rho_X) > 1+\eps
\]
for $n$ sufficiently large, then the graph $G$ is connected a.a.s.
\end{theorem}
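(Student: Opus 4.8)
The plan is to reduce the statement to the classical Erd\H{o}s--R\'enyi connectivity threshold, Theorem \ref{T:Gnpconn}, by exploiting the finiteness of the diameter together with the monotonicity of $f_n$. The only property of the metric space that the argument actually uses is that $d(u,v)\le \rho_X$ for all $u,v\in X$; the connectedness of $X$ plays no role.

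First I would condition on the (random) positions of the $n$ vertices in $X$. Write $\mathbf{p}=(p_1,\dots,p_n)\in X^n$ for a realization of these positions, drawn i.i.d.\ according to $\mu$. Conditioned on $\mathbf{p}$, the graph $G$ is a random graph on vertex set $\{1,\dots,n\}$ with mutually independent edges and $\p(i\sim j\mid \mathbf{p})=f_n(d(p_i,p_j))$. Since $f_n$ is decreasing and $d(p_i,p_j)\le \rho_X$, we have $f_n(d(p_i,p_j))\ge f_n(\rho_X)$ for every pair $i,j$ and every realization $\mathbf{p}$. Hence, conditioned on $\mathbf{p}$, the graph $G$ dominates the Erd\H{o}s--R\'enyi graph $G(n,f_n(\rho_X))$ in the sense of Lemma \ref{dominated graphs and connectivity}.

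Next, the hypothesis $\frac{n}{\log n}f_n(\rho_X)>1+\eps$ is exactly $f_n(\rho_X)>\frac{(1+\eps)\log n}{n}$ for $n$ sufficiently large, so Lemma \ref{T:domconn} (equivalently Theorem \ref{T:Gnpconn} combined with Lemma \ref{dominated graphs and connectivity}) gives $\p(G\text{ connected}\mid \mathbf{p})\ge \p\big(G(n,f_n(\rho_X))\text{ connected}\big)\to 1$. The key point is that this lower bound is \emph{uniform} in $\mathbf{p}$: the quantity $f_n(\rho_X)$ does not depend on the realization of the positions. Taking expectations over $\mathbf{p}$,
\[
\p(G\text{ connected}) = \e{\p(G\text{ connected}\mid \mathbf{p})} \ge \p\big(G(n,f_n(\rho_X))\text{ connected}\big)\to 1 ,
\]
which proves the theorem. (As a sanity check, for the traditional choice $f(d)=\alpha e^{-d/\beta}$ with $\alpha,\beta\in(0,1]$, the value $f(\rho_X)=\alpha e^{-\rho_X/\beta}$ is a fixed positive constant, so $\frac{n}{\log n}f(\rho_X)\to\infty$ and the hypothesis holds trivially, recovering the folklore fact that such Waxman graphs are connected a.a.s.)

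There is no substantial obstacle in this argument; it is essentially a domination bound. The one subtlety worth flagging is the conditioning step: \emph{unconditionally}, the edge events $\{i\sim j\}$ are not independent, since two edges sharing an endpoint are correlated through the position of that common vertex, so one cannot invoke Lemma \ref{T:domconn} directly for $G$. Conditioning on $\mathbf{p}$ restores edge independence, and the fact that the conditional edge-probability bound $f_n(\rho_X)$ is uniform over all realizations is precisely what allows the conditioning to be removed at the end.
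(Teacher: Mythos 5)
Your proof is correct and takes essentially the same route as the paper's: dominate the Waxman graph by an Erd\H{o}s--R\'enyi graph with edge probability $f_n(\rho_X)$ and invoke Lemma \ref{T:domconn}. The only difference is that you explicitly condition on the vertex positions to restore edge independence before applying the domination lemma and then remove the conditioning using the uniformity of the bound --- a subtlety the paper's two-line proof glosses over, so your version is in fact slightly more careful.
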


\begin{proof}
For all $x,y$ vertices in $X$, we see that $\mathbb{P}(x\sim y) > f_n(\rho_X)$. By hypothesis, $\frac{n}{\log n} f_n(\rho_X) > 1+\eps$ for $n$ sufficiently large, so by Lemma \ref{T:domconn}, $G$ is connected a.a.s.
\end{proof}

We find that the above theorem applies to traditional Waxman models by fixing $f_n$, as we thus have $f_n(\rho_X)=f(\rho_X)$ is constant. The application of the theorem leads to no further consequence in traditional Waxman models, so we depart from the model for the purposes of this paper. Indeed, overall the traditional Waxman model becomes locally quite dense over time, and it is for this reason that we depart from this model, and allow the functions $f_n$ and the metric spaces themselves to change with $n$.

\section{Connectivity in $G(X_n, f_n)$}
In this section, we prove Theorems \ref{T:conn}, \ref{T:disconn}, and \ref{T:main}, regarding the a.a.s.\ connectivity of random distance graphs having as their vertex sets nested connected finite metric spaces $(X_n,\rho)$ with finite diameter. 

We begin with the two most general theorems, namely, Theorems \ref{T:conn} and \ref{T:disconn}. We note that Theorem \ref{T:conn} relies almost entirely on Lemma \ref{T:domconn}. Throughout this section, we assume that $f_n$ is a monotonically decreasing function for each $n$.

\begin{proof}[Proof of Theorem \ref{T:conn}]
Let $H=G(X_n,f_n(\diam X_n))$. Then as $f_n$ is monotonically decreasing, $H$ is an Erd\H{o}s-R\'enyi graph that dominates $G_n$ and is a.a.s. connected by Theorem \ref{T:Gnpconn}. Therefore $G_n$ is a.a.s.\ connected by Lemma \ref{T:domconn}.
\end{proof}

We now turn to the proof of Theorem \ref{T:disconn}. We here use a simplified version of the proof that will be used in the case that $X_n=L^r_n$.

\begin{proof}[Proof of Theorem \ref{T:disconn}]
Let $v\in V(G_n)$. We note that as $|\{u\in V(G_n)\ | \ \rho(u, v)=d\}|\leq a_n(d)$ and $f_n(d)\leq\frac{|X_n|^{-\alpha}}{a_n(d)\rho_n}$ for all $d>0$, we thus have
\[
\mathbb{E}[\deg_{G_n}(v)]\leq \sum_{d=1}^{\rho_n}a_n(d)f_n(d)\leq\sum_{d=1}^{\rho_n}\frac{|X_n|^\alpha}{\rho_n}=|X_n|^{-\alpha}.
\]
By Markov's inequality,
\[
\mathbb{P}(v\,\textrm{not isolated in}\, G_n)=\mathbb{P}\left(\deg_{G_n} (v)>\frac{1}{2}\right)\leq 2|X_n|^{-\alpha}
\]
Therefore, the expected number of nonisolated vertices is at most $2|X_n|^{1-\alpha}$. Let $\delta\in(0,1)$ with $1-\delta<\alpha$. By Markov's inequality again,
\[
\mathbb{P}(G_n \,\textrm{has at least}\, |X_n|^\delta\,\textrm{nonisolated vertices)}\leq 2|X_n|^{1-\alpha-\delta}.
\]
Therefore, with probability at least $1-2|X_n|^{1-\alpha-\delta}=1-\lilOh{1}$, $G_n$ has at least $|X_n|-|X_n|^\delta=|X_n|(1-|X_n|^{\delta-1})=|X_n|(1-\lilOh{1})$ isolated vertices.

\end{proof}

As noted in the introduction, there is some difference in the size of the two bounds on $f_n$ in these two theorems. It seems likely that tighter restrictions on $a_n(d)$ could improve the second theorem substantially, if one controls the type of metric space permitted. We note also that straightforward generalizations of these theorems can be derived in the case that $X_n$ is not a finite metric space, but instead we take a Waxman-like approach, and choose finitely many vertices from a single metric space with a finite diameter. 

\subsection{Proof of Theorem \ref{T:main}}\label{S:mainthm}

In this section we focus our analysis on Theorem \ref{T:main}; that is, the case that $X_n=L_n^r$, the $r$-dimensional integer lattice of width $n$ in each dimension under the $\ell^1$ metric, which we shall denote by $\rho$, and $f_n(d) = \frac{1}{n^\beta d}$ for some $\beta>0$. As the proofs of the two parts of the theorem are substantially different in character, we write them as two separate theorems below. We begin with the first statement. Throughout this section, we shall use the following notation.

 Let $\Z^{r}$ denote the $r$-dimensional integer lattice. Write $L_{n}^{r}=\{a \in \Z^r\ | \ 0\leq a_i\leq n-1\hbox{ for all } i\}\subset \Z^r$, the $n\times n\times\dots\times n$ integer lattice in $r$ dimensions. We call $L_{n}^{r}$ the $r-$dimensional lattice of size $n$, and when context makes $n,r$ clear, we write $L=L_{n}^{r}$ and $\La=\Z^r$. Our primary focus will be on the graph $G(L, f_n)$, where $f_n(d) = \frac{1}{n^\beta d}$. We begin with the first statement in Theorem \ref{T:main}, restated below for convenience, whose proof mirrors that of Theorem \ref{T:conn}.

\setcounter{theorem}{3}
\begin{theorem}[Part 1]
Let $L=L_n^r$, and $G=G(L, f_n)$, where $\beta<r-1$. Then $G$ is connected a.a.s..
\end{theorem}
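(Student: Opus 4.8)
The plan is to apply Lemma~\ref{T:domconn} directly, exactly as in the proof of Theorem~\ref{T:conn}, using only that $L=L_n^r$ is a finite metric space and that $f_n$ is monotonically decreasing. The single geometric input needed is the diameter: under the $\ell^1$ metric $\rho$ we have $\diam L = r(n-1)$, attained between the two ``corner'' vertices $(0,0,\dots,0)$ and $(n-1,n-1,\dots,n-1)$ (any two points of $L$ differ by at most $n-1$ in each of the $r$ coordinates). Consequently, for every pair $u,v\in V(G)$,
\[
\p(u\sim v)=f_n(\rho(u,v))\ \geq\ f_n\big(r(n-1)\big)=\frac{1}{r(n-1)\,n^{\beta}},
\]
since $f_n$ is decreasing.

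Next I would compare this uniform lower bound to the Erd\H{o}s-R\'{e}nyi connectivity threshold for $m=|L|=n^r$ vertices. It suffices to exhibit $\epsilon>0$ with
\[
\frac{1}{r(n-1)\,n^{\beta}}\ >\ \frac{(1+\epsilon)\ln(n^r)}{n^r}=\frac{(1+\epsilon)\,r\ln n}{n^r}
\]
for all sufficiently large $n$. Rearranging, this is equivalent to $\dfrac{n^{r}}{r(n-1)\,n^{\beta}}>(1+\epsilon)\,r\ln n$, and since $\dfrac{n^{r}}{r(n-1)\,n^{\beta}}=\bigTheta{n^{\,r-\beta-1}}$, the hypothesis $\beta<r-1$ gives $r-\beta-1>0$, so the left-hand side grows polynomially in $n$ while the right-hand side grows only logarithmically. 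Hence the inequality holds eventually (indeed for any fixed $\epsilon>0$, however large). Lemma~\ref{T:domconn} then immediately yields that $G$ is connected a.a.s.

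\textbf{Main obstacle.} There is essentially no obstacle: all the content is in the elementary fact $\diam L_n^r=r(n-1)$ together with monotonicity of $f_n$, which reduces the statement to Theorem~\ref{T:Gnpconn} via domination (Lemma~\ref{dominated graphs and connectivity}, Lemma~\ref{T:domconn}). The only point deserving a moment's care is the exponent bookkeeping: one must use $\beta<r-1$ strictly, since at $\beta=r-1$ the ratio $n^{\,r-\beta-1}/\ln n\to 0$ and this crude domination argument fails. This tightness is exactly what one expects given the sharp threshold in Theorem~\ref{T:main} and the complementary disconnection statement (Part~2) for $\beta>r-1$.
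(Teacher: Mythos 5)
Your proposal is correct and follows the same route as the paper: bound every pairwise probability below by $f_n$ evaluated at the diameter (the paper uses the slightly cruder bound $\rho(u,v)<rn$, giving $p=\frac{1}{rn^{1+\beta}}$), then note $n^r p = \bigTheta{n^{r-1-\beta}} \gg \log(n^r)$ when $\beta<r-1$ and invoke Lemma \ref{T:domconn}. No substantive difference.
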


\begin{proof}
Let $u, v\in L$. Note by definition that $\rho(u, v)\leq r(n-1)<rn$, and hence $\p(u\sim v)\geq f_n(rn) = \frac{1}{rn^{1+\beta}}=:p$.

Moreover, $G$ has $n^r$ vertices. Note that $n^rp=n^{r}\frac{1}{rn^{1+\beta}}=\frac{1}{r}n^{r-1-\beta}\gg\log(n^{r})$ when $\beta<r-1$. But then by Lemma \ref{T:domconn}, we immediately have that $G$ is connected a.a.s..
\end{proof}

We now turn our attention to the proof of the second half of Theorem \ref{T:main}. To prove the second half, we shall view $G(L, f_n)$ as a subgraph of the infinite graph $G(\La, g_n)$, where $g_n(d)=f_n(d)$ if $d\leq r(n-1)$ and $0$ otherwise. Note that it is sufficient to prove that $G$ has $n^r(1-\lilOh{1})$ isolated vertices whenever $\beta>r-1$. To do so, we shall view $G(L, f_n)$ as a subgraph of the infinite graph $G(\La, g_n)$, where \begin{equation}\label{E:gdef}g_n(d)=\left\{\begin{array}{ll}f_n(d) & \hbox{ if }d\leq r(n-1)\\0&\hbox{ otherwise}\end{array}\right. .\end{equation} The structure of the proof is similar to that of the proof of Theorem \ref{T:disconn}, however we shall be able to develop much more precise estimates on $a_n(d)$ in this case.

To begin, note $G(\La, g_n)$ is vertex transitive, and hence the expected degree is the same for every vertex. Fix a vertex in $v\in L^r_n$, and define $a^{(v)}_{r}(d)$ to be the number of vertices $u$ in $L^r_n$ such that $\rho(u, v)=d$. For $d\leq r(n-1)$, let $a_{r}(d)$ denote the number of vertices $u$ in $\Z^r$ with $\rho(u, v)=d$, and define $a_{r}(d)=0$ for $d>r(n-1)$. By definition, we have $a^{(v)}_r(d)\leq a_r(d)$, and hence note the following simple observations:
\begin{equation}\label{E:degininfL}\e{\deg_{\Z^r}(v)} = \sum_{d=0}^{r(n-1)} a_r(d)g_n(d),\end{equation}
 and\begin{equation}\label{E:deginL} \e{\deg_{L_n^r}(v)} = \sum_{d=0}^{r(n-1)} a^{(v)}_r(d)f_n(d)\leq \e{\deg_{\Z^r}(v)}.\end{equation}

As $a_r(d)$ is independent of the chosen vertex $v$, we thus have a uniform bound on the expected degree of any vertex in $G(L_n^r, f_n)$. In order to make this bound useful, we shall use the following recursive formula for $a_r(d)$. We note that in this formula, we shall take $a_r(0)=1$, as a vertex has exactly one vertex at distance 0 to it, namely, itself.

\begin{lemma}\label{L:ard}
%For $d\leq 2n-2$, $a_2(d)=4d$, and for $d\leq r(n-1)$,
%\[
%a_{r+1}(d) = 2\left( \sum_{k=1}^{d-1} a_r(k) \right) + a_r(d).
%\] 
For $d\leq 2n-2$, $a_{2}(d)=4d$, and for $d\leq r(n-1)$, 
\[
a_{r+1}(d)=2\left(\sum_{k=0}^{d-1}a_{r}(k)\right)+a_{r}(d).
\]
\end{lemma}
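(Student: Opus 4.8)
The plan is to prove both formulas combinatorially. For the base case $r=2$, I would count lattice points on the $\ell^1$-sphere of radius $d$ in $\Z^2$ directly: the set $\{u : |u_1|+|u_2|=d\}$ consists of the four ``corner'' points $(\pm d, 0)$, $(0,\pm d)$ together with, for each $1\le k\le d-1$, the four points $(\pm k, \pm(d-k))$ with both signs free; this gives $4 + 4(d-1) = 4d$ points, valid for any $d\ge 1$ (the restriction $d\le 2n-2$ is only relevant because $a_2(d)$ is set to $0$ beyond the diameter, but on the infinite lattice the count is exactly $4d$). The case $d=0$ gives $a_2(0)=1$, consistent with the stated convention.

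For the recursion, I would condition on the last coordinate. A point $u\in\Z^{r+1}$ with $\rho(u,v)=d$ (taking $v$ to be the origin without loss of generality, by translation invariance) is determined by its first $r$ coordinates $u'\in\Z^r$ together with $u_{r+1}\in\Z$, subject to $\rho(u',0)+|u_{r+1}| = d$. Splitting on whether $u_{r+1}=0$, $u_{r+1}>0$, or $u_{r+1}<0$: the case $u_{r+1}=0$ contributes exactly $a_r(d)$ choices for $u'$; each of the two cases $u_{r+1}=\pm(d-k)$ for a fixed $k$ with $0\le k\le d-1$ contributes $a_r(k)$ choices for $u'$ (here $u_{r+1}$ ranges over nonzero values, i.e.\ $d-k$ ranges over $1,\dots,d$, equivalently $k$ ranges over $0,\dots,d-1$). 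Summing gives
\[
a_{r+1}(d) = a_r(d) + 2\sum_{k=0}^{d-1} a_r(k),
\]
which is the claimed identity. I would note that the range restriction $d\le r(n-1)$ in the statement is not needed for the identity on the infinite lattice $\Z^{r+1}$ — it is simply the range over which $a_r$ has been declared nonzero and over which the authors will apply the formula; on $\Z^r$ the recursion holds for all $d\ge 0$.

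I do not anticipate a genuine obstacle here: this is a routine ``count by conditioning on one coordinate'' argument. The only point requiring a little care is bookkeeping the indices in the sum — making sure that the positive and negative values of the last coordinate each range over $u_{r+1}\in\{1,\dots,d\}$ in absolute value, so that the corresponding $a_r(\cdot)$ arguments run over $\{0,\dots,d-1\}$ rather than $\{1,\dots,d\}$ — and confirming the edge conventions $a_r(0)=1$ so the $k=0$ term correctly accounts for the two points $(0,\dots,0,\pm d)$. One could alternatively phrase this via generating functions: since $\sum_{d\ge0} a_r(d) x^d = \left(\frac{1+x}{1-x}\right)^{r}$ on $\Z^r$, the recursion is just multiplication by $\frac{1+x}{1-x}$, but the combinatorial argument is cleaner and self-contained.
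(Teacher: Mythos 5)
Your proposal is correct and follows essentially the same argument as the paper: the recursion is obtained by conditioning on the last coordinate (the paper phrases it as stacking copies of $\Z^r$ along the $(r+1)^{\text{st}}$ axis), with the factor of $2$ accounting for $u_{r+1}=\pm(d-k)$. The only cosmetic difference is the base case, where you count the $4d$ points of the $\ell^1$-sphere in $\Z^2$ directly while the paper applies the same recursion to the dimension-one counts $a_1(0)=1$, $a_1(k)=2$; both are immediate.
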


\begin{proof} As noted above, $a_r(d)$ is independent of the vertex $v$; let us suppose that $v=\mathbf{0}$. We take $(b_1, b_2,\ldots, b_{r+1})$ to be a point in $\Z^{r+1}$. Let us consider, then 

\begin{eqnarray*}
a_{r+1}(d) &=& \left| \left\{(b_1, b_2, \dots, b_{r+1})\ \vert \ \sum |b_i| = d\right\} \right|\\
& = & \sum_{k=0}^d \left| \left\{(b_1, b_2, \dots, b_{r+1})\ | \ \sum |b_i|=d \hbox{ and }|b_{r+1}|=k\right\}\right|.
\end{eqnarray*}

That is to say, we can view $\Z^{r+1}$ as an infinite stack of copies of $\Z^r$, arrayed along the $(r+1)^{\textrm {st}}$ axis. To calculate $a_{r+1}(d)$, we then simply add up the values of $a_{r}(k)$ contributed from each copy. Note that if $|a_{r+1}|=k\neq 0$, we have
\[\left|\left\{(b_1, b_2, \dots, b_{r+1})\ | \ \sum |b_i|=d \hbox{ and }|b_{r+1}|=k\right\}\right|= 2\left|\left\{(b_1, b_2, \dots, b_r)\ | \ \sum |b_i|=d-k\right\}\right|,\]
where the 2 is to accommodate the duplication for $a_{r+1}=\pm k$. The case that $k=0$ is identical, without the factor of two. 

Together with the above, we thus obtain 
\begin{eqnarray*}
a_{r+1}(d)  & = & \sum_{k=0}^d \left| \left\{(b_1, b_2, \dots, b_{r+1})\ | \ \sum |b_i|=d \hbox{ and }|b_{r+1}|=k\right\}\right|\\
& = & \sum_{k=1}^d 2\left|\left\{(b_1, b_2, \dots, b_r)\ | \ \sum |b_i|=d-k\right\}\right| + \left|\left\{(b_1, b_2, \dots, b_r)\ | \ \sum |b_i|=d\right\}\right|\\
& = & \sum_{k=1}^d 2a_r(d-k) + a_r(d).
\end{eqnarray*}

Reindexing this sum yields the stated result.

For the case that $r=2$, note that we can apply the above calculation to obtain that for $d\leq 2(n-1)=2n-2$,
\[ a_2(d) = 2\sum_{k=0}^{d-1} a_1(k) + a_1(d).\]
Note that in dimension 1, there are precisely two vertices at distance $k$ for any positive $k$, and one vertex at distance 0. Hence, we have
\[a_2(d) = 2(1 + 2(d-1))+2 = 4d.\]

\end{proof}

\begin{lemma}\label{expecteddegree}
Let $H_n^{r}=G(\Z^{r},g_n)$, where $g_n$ is as in Equation \eqref{E:gdef}. Fix $v\in V({H_n^r})$. Then for all $r\geq 2$,
\[
\e{\deg_{H_n^{r}}(v)} = \bigOh{n^{r-1-\beta}}.
\]
\end{lemma}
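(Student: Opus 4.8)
The plan is to turn the statement into a single-variable summation estimate and then bound the summand by a polynomial in $d$. Since $H_n^r = G(\Z^r, g_n)$ is vertex transitive, every vertex has the same expected degree, so by Equation \eqref{E:degininfL} and the fact that $g_n(d)=f_n(d)=\frac{1}{n^\beta d}$ on $1\le d\le r(n-1)$ (the $d=0$ term contributing nothing),
\[
\e{\deg_{H_n^r}(v)} = \sum_{d=0}^{r(n-1)} a_r(d)g_n(d) = \frac{1}{n^\beta}\sum_{d=1}^{r(n-1)} \frac{a_r(d)}{d}.
\]
Thus it suffices to prove $\sum_{d=1}^{r(n-1)} \frac{a_r(d)}{d} = \bigOh{n^{r-1}}$.

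The key step is the claim that for each fixed $r\ge 2$ there is a constant $C_r$, independent of $n$ and $d$, with $a_r(d)\le C_r d^{r-1}$ for all $d\ge 1$. I would prove this by induction on $r$ using Lemma \ref{L:ard}. The base case $r=2$ is immediate, since $a_2(d)=4d$, so $C_2=4$ works. For the inductive step it is cleanest to apply the identity to the untruncated counts $\tilde a_r(d)=|\{u\in\Z^r: \rho(u,\mathbf 0)=d\}|$, which dominate $a_r(d)$ and obey the recursion $\tilde a_{r+1}(d)=2\sum_{k=0}^{d-1}\tilde a_r(k)+\tilde a_r(d)$ for all $d\ge 0$ (exactly the stacking argument from the proof of Lemma \ref{L:ard}, with no truncation needed). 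Assuming $\tilde a_r(k)\le C_r k^{r-1}$ for $k\ge 1$ and $\tilde a_r(0)=1$, and using $\sum_{k=1}^{d-1}k^{r-1}\le d\cdot d^{r-1}=d^r$, we get
\[
\tilde a_{r+1}(d) \le 2 + 2C_r\sum_{k=1}^{d-1}k^{r-1} + C_r d^{r-1} \le 2C_r d^{r} + C_r d^{r-1} + 2 \le C_{r+1}\, d^{r}
\]
for $d\ge 1$, where $C_{r+1}$ depends only on $C_r$ and $r$. Since $r$ is held constant, $C_r=\bigOh{1}$.

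With the polynomial bound in hand, the remaining estimate is routine: for $r\ge 2$,
\[
\sum_{d=1}^{r(n-1)} \frac{a_r(d)}{d} \le C_r\sum_{d=1}^{r(n-1)} d^{r-2} \le C_r\, r(n-1)\cdot\big(r(n-1)\big)^{r-2} = \bigOh{n^{r-1}},
\]
with the $r=2$ case reducing to $\sum_{d=1}^{2(n-1)}1=\bigOh{n}$, consistent with the bound. Multiplying by $n^{-\beta}$ yields $\e{\deg_{H_n^r}(v)}=\bigOh{n^{r-1-\beta}}$. The only point needing care — the ``main obstacle,'' such as it is — is keeping the induction constants honest: one must check that each $C_r$ depends on $r$ alone and not on $n$ or $d$ (fine, as $r$ is fixed and the recursion is composed only $r$ times), and one must handle the $d=0$ (equivalently $k=0$) terms, which are absorbed into additive constants throughout. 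Everything else is a direct consequence of Lemma \ref{L:ard}.
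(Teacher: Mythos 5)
Your proof is correct, but it takes a genuinely different route from the paper. The paper also inducts on $r$, but its induction runs at the level of the expected degree itself: it substitutes the recursion of Lemma \ref{L:ard} into the sum $\sum_d a_{r+1}(d)g_n(d)$, splits off the $a_r(d)/d$ term (which is the dimension-$r$ expected degree, handled by the inductive hypothesis), and treats the remaining double sum by interchanging the order of summation and bounding $\tfrac{2k}{d}\le 2$, obtaining $\e{\deg_{H^{r+1}}(v)}=\bigOh{n^{r-\beta}}$. You instead factor out a purely combinatorial key lemma — the pointwise sphere-size bound $a_r(d)\le C_r d^{r-1}$ for the untruncated counts, proved by inducting the same recursion — and then finish with a single one-line summation $\frac{1}{n^\beta}\sum_{d=1}^{r(n-1)}\frac{a_r(d)}{d}\le \frac{C_r}{n^\beta}\sum_{d=1}^{r(n-1)}d^{r-2}=\bigOh{n^{r-1-\beta}}$. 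Your decomposition is more modular: the polynomial growth of $\ell_1$-spheres is isolated as an $n$- and $\beta$-independent fact (reusable for other choices of $f_n$), it avoids the double-sum interchange entirely, and your bookkeeping of the constants $C_r$ and of the $k=0$ terms is sound, as is your observation that the stacking recursion needs no truncation on $\Z^{r+1}$ and that $a_r(d)\le\tilde a_r(d)$. What the paper's version buys is that it works directly with the quantity of interest and exhibits how the dimension-$(r+1)$ degree is driven by the stacked dimension-$r$ copies, without needing the sharp $d^{r-1}$ growth rate. Both arguments yield the stated $\bigOh{n^{r-1-\beta}}$; there is no gap in yours.
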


\begin{proof}
We work by induction on $r$. For simplicity of notation, we write $H_n^r$ as $H$ or $H^r$ when $n$ is clear. 

First, suppose $r=2$. By Lemma \ref{L:ard} and Equation \eqref{E:degininfL}, we thus have

 \[ \e{\deg_{H}(v)} = \sum_{d=1}^{2(n-1)}a_2(d)g_n(d) = \sum_{d=1}^{2(n-1)}4d\left(\frac{1}{n^\beta d}\right) = 4(2n-2)\frac{1}{n^\beta}<8n^{1-\beta}.\]
 
Hence, the case that $r=2$ is established. Now, for induction, suppose that the result holds for $r$. Note by Lemma \ref{L:ard} that for any vertex $v\in V(H^{r+1})$, we have

\begin{align*}
\e{\deg_{H^{r+1}}(v)} &= \sum_{d=1}^{(r+1)(n-1)} a_{r+1}(d)g_n(d)\\
 &= \frac{1}{n^\beta} \sum_{d=1}^{(r+1)(n-1)}  \frac{2\left( \sum_{k=0}^{d-1} a_r(k) \right) + a_r(d)}{d}\\
&= \frac{1}{n^\beta} \sum_{d=1}^{(r+1)(n-1)} \frac{a_r(d)}{d} + \frac{1}{n^\beta} \sum_{d=1}^{(r+1)(n-1)} \frac{2}{d} \sum_{k=0}^{d-1} a_r(k).
\end{align*}

For the first term, notice that $a_r(d)=0$ by definition if $d>r(n-1)$, and hence
\begin{equation}\label{E:firstterm} \frac{1}{n^\beta}\sum_{d=1}^{(r+1)(n-1)} \frac{a_r(d)}{d} =  \frac{1}{n^\beta} \sum_{d=1}^{r(n-1)} \frac{a_r(d)}{d} = \e{\deg_{H^r}(v)}= \bigOh{n^{r-1-\beta}},  \end{equation}
by the inductive hypothesis.

For the second term, we may change the order of summation to obtain
\[ \frac{1}{n^\beta} \sum_{d=1}^{(r+1)(n-1)} \frac{2}{d} \sum_{k=0}^{d-1} a_r(k) =  \frac{1}{n^\beta}\left(2\sum_{k=1}^{(r+1)(n-2)\frac{1}[d}+ \sum_{k=1}^{(r+1)(n-1)-1} \frac{a_r(k)}{k} \sum_{d=k+1}^{(r+1)(n-1)} \frac{2k}{d}\right).\]

The first term corresponds to the case that $k=0$, the second to all other values of $k$. Note that for the first term, we have 
\begin{equation*}
\frac{1}{n^\beta}\sum_{d=1}^{(r+1)(n-2)}\frac{2}{d}\leq \frac{1}{n^\beta}2(r+1)(n-2) = \bigOh{n^{1-\beta}} = \bigOh{n^{r-1-\beta}},
\end{equation*}
since $r\geq 2$.

For the second term, we have $\frac{2k}{d}\leq \frac{2k}{k+1}\leq 2$ for all $k>0$. Further, we can apply the property that $a_r(k)=0$ if $k>r(n-1)$, and we thus have

\begin{eqnarray} \nonumber\frac{1}{n^\beta} \sum_{k=1}^{(r+1)(n-1)-1} \frac{a_r(k)}{k} \sum_{d=k+1}^{(r+1)(n-1)} \frac{2k}{d} &\leq& \frac{1}{n^\beta}\sum_{k=1}^{r(n-1)}\frac{a_r(k)}{k}\sum_{d={k+1}}^{(r+1)(n-1)}2\\
\nonumber & \leq & 2((r+1)(n-1)-1)\frac{1}{n^\beta}\sum_{k=1}^{r(n-1)}\frac{a_r(k)}{k}\\
\nonumber & = & 2(n(r+1)-(r+2))\bigOh{n^{r-1-\beta}}\\
\label{E:secondterm}& = & \bigOh{n^{r-\beta}}.
\end{eqnarray}

Taking Equations \eqref{E:firstterm} and \eqref{E:secondterm} together, we obtain
\[\e{\deg_{H^{r+1}}(v)} = \bigOh{n^{r-1-\beta}} +\bigOh{n^{r-\beta}} =\bigOh{n^{r-\beta}},\] as desired.
\end{proof}

\setcounter{theorem}{3}
\begin{theorem}[Part 2]
Let $G=G(L_n^r,f_n)$ with $r$ fixed and $f_n(d)=\frac{1}{n^{\beta}d}$. If $\beta>r-1$, then there exists $\eps>0$ such that with probability at least $1- n^{-\eps}$, $G$ has $n^r(1-\lilOh{1})$ isolated vertices.
\end{theorem}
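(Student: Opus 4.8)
The plan is to run exactly the two-step first-moment argument from the proof of Theorem~\ref{T:disconn}, but feeding in the sharp degree bound of Lemma~\ref{expecteddegree} in place of a crude per-distance estimate on $f_n$. The point is that all of the metric-combinatorial work is already done: combining Equation~\eqref{E:deginL} with Lemma~\ref{expecteddegree}, every vertex $v$ of $G=G(L_n^r,f_n)$ satisfies
\[
\e{\deg_G(v)}\;\leq\;\e{\deg_{H_n^r}(v)}\;=\;\bigOh{n^{r-1-\beta}}.
\]
Since $\beta>r-1$, set $\gamma=1+\beta-r>0$, so that $\e{\deg_G(v)}\leq Cn^{-\gamma}$ for some constant $C=C(r)$ and all $n$ sufficiently large. (Note that Theorem~\ref{T:disconn} does not apply off-the-shelf here, because $f_n(d)=\frac{1}{n^\beta d}$ does not admit a clean pointwise bound of the form $\frac{|X_n|^{-\alpha}}{a_n(d)\rho_n}$ uniformly in $d$; this is precisely why the precise estimate in Lemma~\ref{expecteddegree} was developed.)

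First I would bound the probability that a fixed vertex is nonisolated. Since $\deg_G(v)$ is a nonnegative integer, $\mathbf{1}[\deg_G(v)\geq 1]\leq \deg_G(v)$, so by Markov's inequality $\p(v\text{ not isolated in }G)=\p(\deg_G(v)\geq 1)\leq \e{\deg_G(v)}\leq Cn^{-\gamma}$ (equivalently, apply Markov with threshold $1/2$ as in the proof of Theorem~\ref{T:disconn}). By linearity of expectation over the $n^r$ vertices, the number $N$ of nonisolated vertices of $G$ has $\e{N}\leq Cn^{r-\gamma}$.

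Next I would apply Markov's inequality a second time. Choose $\delta$ with $r-\gamma<\delta<r$ — possible precisely because $\gamma>0$; for concreteness take $\delta=r-\gamma/2$. Then
\[
\p\big(N\geq n^\delta\big)\;\leq\;\frac{\e{N}}{n^\delta}\;\leq\;Cn^{r-\gamma-\delta}\;=\;Cn^{-\gamma/2},
\]
which is at most $n^{-\gamma/4}$ for $n$ sufficiently large. Setting $\eps=\gamma/4$, we conclude that with probability at least $1-n^{-\eps}$ the graph $G$ has fewer than $n^\delta$ nonisolated vertices, hence at least $n^r-n^\delta=n^r\big(1-n^{\delta-r}\big)=n^r(1-\lilOh{1})$ isolated vertices, as claimed.

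As for difficulty: there is essentially no obstacle left at this stage — the content of the theorem lives entirely in Lemmas~\ref{L:ard} and~\ref{expecteddegree}, and once the $\bigOh{n^{r-1-\beta}}$ degree bound is in hand the remainder is the standard ``Markov twice'' argument already exhibited for Theorem~\ref{T:disconn}. The only points requiring a little care are bookkeeping the implied constant $C$ when passing from $Cn^{-\gamma/2}$ to a clean bound of the form $n^{-\eps}$, and selecting $\delta$ in the open interval $(r-\gamma,r)$ so that simultaneously the failure probability $n^{r-\gamma-\delta}$ decays polynomially and the count $n^\delta$ of possibly-nonisolated vertices is $\lilOh{n^r}$.
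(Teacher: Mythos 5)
Your proposal is correct and follows essentially the same route as the paper: bound $\e{\deg_G(v)}$ via Lemma \ref{expecteddegree}, apply Markov once per vertex to bound the probability of being nonisolated, sum over vertices, and apply Markov a second time to a suitably chosen polynomial threshold, yielding failure probability $n^{-\eps}$ and $n^r(1-\lilOh{1})$ isolated vertices. The only cosmetic difference is your choice of threshold $n^{\delta}$ with $\delta=r-\gamma/2$ and absorbing the constant by shrinking $\eps$ to $\gamma/4$, versus the paper's threshold $2cn^{2r-1-\beta+\eps}$ with $\eps=\frac{\beta+1-r}{2}$; both are valid.
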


\setcounter{theorem}{7}

\begin{proof}

Let $G=G(L_n^r, f_n)$, where $\beta>r-1$ and $r\geq 2$, and let $H=G(\Z^r, g_n)$. By Lemma \ref{expecteddegree}, we thus have that there exists some constant $c$ such that, for any vertex $v\in V(G)$,
\[\e{\deg_G(v)} \leq \e{\deg_H(v)}\leq cn^{r-1-\beta}.\]

Thus, by Markov's Inequality, we have that
\[\p(v\hbox{ is not isolated in }G ) =\p\left(\deg(v)>\frac{1}{2}\right)\leq 2cn^{r-1-\beta}.\] Hence, the expected number of nonisolated vertices in $G$ is at most $n^r(2cn^{r-1-\beta})=2cn^{2r-1-\beta}$. By Markov's inequality again, for any $\eps>0$, we have
\[\p\left(G\hbox{ has at least }2cn^{2r-1-\beta+\eps}\hbox{ nonisolated vertices}\right)\leq n^{-\eps}=\lilOh{1}.\]

Take $\eps=\frac{-r+\beta+1}{2}>0$. Note then that as $r-\beta-1<0$, that $r-\beta-1+\eps<0$. Thus, we have that with probability at least $1-n^{-\eps}=1-\lilOh{1}$, $G$ has at least 
\[n^r-2cn^{2r-1-\beta+\eps}=n^r\left(1-2cn^{r-1-\beta+\frac{r-\beta-1}{2}}\right) = n^r(1-\lilOh{1})\]
isolated vertices, as desired.

\end{proof}

\section{Expected degree in $G(L_n^2, f_n)$}\label{S:degreetwo}
For completeness we include an exact analysis on the expected degree of a vertex in $G=G(\mathbb{Z}^2,f_n)$ and determine $a_2^{(v)}(d)$ exactly. We do so by explicitly counting the number of vertices at distance $d$ from a given vertex $v=(d_1, d_2)\in L_n^2$. 

Throughout this section, we shall keep $v$ fixed as $(d_1, d_2)$, and hence we will suppress the superscript $(v)$ and simply write $a_2(d)$ in place of $a_2^{(v)}(d)$. Likewise, we shall restrict to working in the $n\times n$ integer lattice, which we shall denote simply by $L$.

Fix a distance $d\leq 2(n-1)$. Recall that from Lemma \ref{L:ard}, in $\Z^2$, there are $4d$ vertices at distance $d$ from $v$. Hence, we need only determine how many of these vertices are in fact members of $L$.

Let $S_d(v)$ be a square of side length $2d$ centered at $v$. We note that not all vertices in $S_d(v)$ will be within distance $d$ of $v$; however, all vertices at distance precisely $d$ from $v$ are contained in $S_d(v)$. By considering the corners of the square $S_d(v)$, we thus have that if \begin{equation}\label{Conditions}d_1-d\geq 0, d_2-d\geq 0, d_1+d\leq n-1, \hbox{ and }d_2+d\leq n-1,\end{equation} then $a_2(d)=4d$.

If these four conditions are not all met, then we have that $S_d(v) \cap (\Z^2\ba L)\neq \emptyset$; our main task then is to count how many vertices at distance $d$ from $v$ lie outside of $L$.

First, consider the case that only one of these inequalities fails; without loss of generality, suppose that $d_1-d<0$. This case is illustrated in Figure \ref{firstcase}. Let $c=(-1, d_2)$, and note that $d(c, v)=d_1+1$. Note that if $u=(x, y)$ is a vertex at distance $d$ from $v$, with $u\notin L$, then we have $x<0$, so that $u$ is obtained from $c$ by taking $k$ steps left and $d-k-d_1-1$ steps either up or down. Hence, there will be $1+2(d-d_1-2)$ such vertices, where we obtain 1 vertex for the case that $k=d-d_1-1$ and 2 vertices (corresponding to steps up or down) in all other cases.

\begin{figure}[htp]
\includegraphics[width=.4\textwidth]{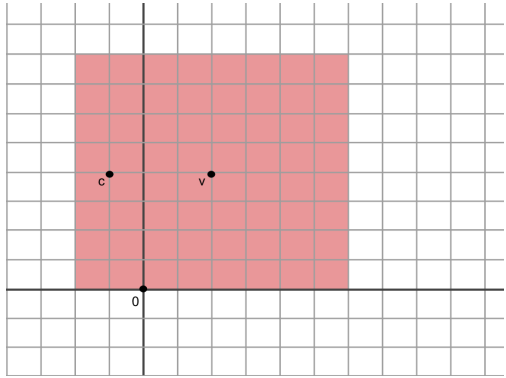}
\caption{An illustration of the case that $d_1-d<0$, but all other conditions in \eqref{Conditions} are met. Here, the shaded region represents $S_d(v)$, and we see that $S_d(v)$ intersects $\Z^2\ba L$ only on one of the four sides.
}\label{firstcase}

\end{figure}

Hence, in the case that $d>d_1$, and all other conditions of \eqref{Conditions} are met, we have that $a_2(d) = 4d-2(d-d_1-2)-1=2d+2d_1+3$.

In all other cases, we shall apply the same technique. We thus need only determine the number of vertices that are double counted by this technique. Without loss of generality, we shall consider this double count only for the case that $d>d_1$ and $d>d_2$; all other cases will be symmetric. This situation is illustrated in Figure \ref{secondcase}.

\begin{figure}[htp]
\label{secondcase}
\includegraphics[width=.4\textwidth]{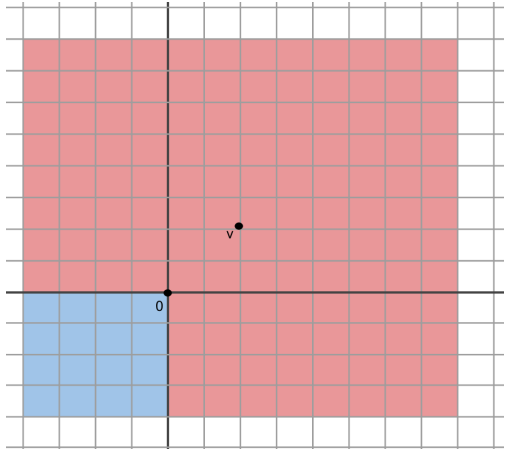}
\caption{An illustration of the case that $d_1-d<0$ and $d_2-d<0$, but all other conditions in \eqref{Conditions} are met. Here, the shaded regions represent $S_d(v)$, and we see that $S_d(v)$ intersects $\Z^2\ba L$ on two of the four sides. The blue shaded region represents vertices that will be double counted by the technique used in the first case.
}\label{secondcase}
\end{figure}

Notice that here we need to count the number of vertices $u=(x, y)$ such that $d(u, v)=d$ and $x<0$, $y<0$. Notice that $d(0, v)=d_1+d_2$, and hence any such vertex $u$ has $d(0, u)=d-d_1-d_2$ (we note also here that if $d_1+d_2\geq d$, there is nothing to count). Note that by symmetry, exactly $\frac{1}{4}$ of the vertices at this distance to $u$, excluding the axes, shall occur in the blue shaded region shown in Figure \ref{secondcase}. Excluding vertices on the axes, there are $4(d-d_1-d_2-1)$ such vertices; hence the number of such vertices with both $x<0$ and $y<0$ is precisely $d-d_1-d_2-1$.

Combining these results and applying symmetry, we thus obtain the following theorem.

\begin{theorem}
Let $\delta_x = 1$ if $x<0$ and $0$ otherwise. Then
\begin{eqnarray*}
a_2(d)&=&4d-\delta_{d_1-d}(2(d-d_1)-3) - \delta_{d_2-d}(2(d-d_2)-3) - \delta_{n-1-d_1-d}(2(d_1+d-n+1)-3)\\ &&- \delta_{n-1-d_2-d}(2(d_2+d-n+1)-3) + \delta_{d_1+d_2-d}(d-d_1-d_2-1) + \delta_{d_1+n-1-d_2-d}(d-d_1-(n-1)+d_2-1)\\
&&  + \delta_{d_2+n-1-d_1-d}(d-d_2-(n-1)+d_1-1)+ \delta_{2(n-1)-d_1-d_2-d}(d-2(n-1)+d_1+d_2-1) 
\end{eqnarray*}

\end{theorem}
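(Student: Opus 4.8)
The plan is to compute $a_2(d)$ by starting from the count $4d$ of lattice points at $\ell_1$-distance exactly $d$ from $v$ in the ambient lattice $\mathbb{Z}^2$, as established in Lemma~\ref{L:ard}, and then subtracting, by inclusion--exclusion, exactly those points that fall outside the box $L=\{0,\dots,n-1\}^2$. A point $u=(x,y)$ lies outside $L$ precisely when one of the four half-plane conditions $x\ge 0$, $y\ge 0$, $x\le n-1$, $y\le n-1$ fails, and these are exactly the four inequalities of~\eqref{Conditions}. So I would organize the argument as
\[
a_2(d)=4d-\sum_i N_i+\sum_{i<j}N_{ij}-\cdots,
\]
where $N_i$ is the number of distance-$d$ points violating the $i$-th condition, $N_{ij}$ the number violating both, and then show the higher-order terms vanish.

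First, the single-condition counts. Fixing, say, the condition $d_1-d\ge 0$ and supposing it fails, I would use the ``pivot'' argument already sketched in the text: the distance-$d$ points with $x<0$ are exactly those reached from the boundary-adjacent point $c=(-1,d_2)$ using the remaining budget $d-(d_1+1)$ split between further leftward steps and one vertical run, which produces a count of the shape $2(d-d_1)-c_0$ for an explicit small constant $c_0$, valid when this is positive and zero otherwise (hence the indicator $\delta_{d_1-d}$). The remaining three single-side counts then follow from the reflections $d_1\mapsto n-1-d_1$ and/or $d_2\mapsto n-1-d_2$, each of which maps $L$ to itself and permutes the four sides, so no genuinely new computation is required; this is also what produces the $(d_1+d-n+1)$- and $(d_2+\cdots)$-style terms in the statement.

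Second, the pairwise counts. The key structural observation is that a single point cannot violate two \emph{opposite} conditions, since it has only one $x$-coordinate and one $y$-coordinate; hence the only nonzero pairwise terms come from the four pairs of \emph{adjacent} sides, i.e.\ the four corners of $L$. For the corner at the origin, a distance-$d$ point with $x<0$ and $y<0$ lies at distance $d-d_1-d_2$ from $(0,0)$, and by the four-fold symmetry about the origin exactly one quarter of the (axis-free) points at that distance land in the open third quadrant, giving $d-d_1-d_2-1$ such points when this is positive; the other three corners come from the same reflections. Finally, violating three of the conditions would force two of them to be opposite, so there are no triple (or higher) intersection terms, the inclusion--exclusion terminates after the pairwise level, and collecting the eight correction terms with their correct signs and indicators yields the displayed formula.

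I expect the main obstacle to be purely bookkeeping rather than conceptual: pinning down every additive constant in the single-side counts (the exact location of the pivot $c$, whether the terminal vertical run has length zero, and the off-by-one from ``axis-free'' counting at the corners), and verifying that each correction term vanishes \emph{exactly} when its indicator is $0$ — that is, that within the allowed range $d\le 2(n-1)$ the clipped diamond never protrudes from two opposite sides, which is precisely what guarantees the second-order expansion is complete. I would finish with two sanity checks: setting all indicators to $0$ must recover $a_2(d)=4d$ from Lemma~\ref{L:ard}, and the two worked cases in the text ($a_2(d)=2d+2d_1+3$ for a single failure, and the $d-d_1-d_2-1$ correction for a two-sided failure) must agree with the assembled formula.
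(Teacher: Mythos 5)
Your outline is essentially the paper's own argument: start from $a_2(d)=4d$ in $\Z^2$ (Lemma \ref{L:ard}), subtract the distance-$d$ points lying beyond each violated side, counted by walking out from the boundary pivot $c=(-1,d_2)$, add back the points beyond two adjacent sides via the corner quarter-count $d-d_1-d_2-1$, and obtain the other sides and corners by the reflections $d_i\mapsto n-1-d_i$. Your only real addition is the explicit inclusion--exclusion bookkeeping, i.e.\ the per-point remark that no lattice point can violate two opposite constraints, so the expansion terminates at the corner level; that is a useful clarification (note, though, that the correct statement is the per-point one — the clipped diamond itself certainly can protrude from two opposite sides when $d$ is large relative to $n$, so phrase the termination argument in terms of a single point's coordinates, not the diamond).

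The caveat is that you dismiss as ``bookkeeping'' exactly the part that carries the content: the additive constants. Carrying out your own pivot count carefully, a distance-$d$ point with $x<0$ is reached from $c=(-1,d_2)$ by $k$ further left steps, $0\le k\le d-d_1-1$, followed by a vertical run of length $d-d_1-1-k$; this gives $1+2(d-d_1-1)=2(d-d_1)-1$ outside points per violated side, not $2(d-d_1)-3$ as in the displayed formula (equivalently, the single-failure case comes out as $a_2(d)=2d+2d_1+1$ rather than $2d+2d_1+3$). Concrete checks: if $d=d_1+1$, exactly one vertex, namely $(-1,d_2)$, is lost, while $2(d-d_1)-3=-1$; and for $n=5$, $v=(0,2)$, $d=2$ there are exactly five vertices of $L$ at distance $2$, while the stated formula returns $7$. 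The corner corrections of the form $d-d_1-d_2-1$ do check out. So if you execute your plan faithfully, your own sanity checks will flag a mismatch with the theorem as stated, and you will need to resolve that discrepancy explicitly (the side terms should carry the constant $-1$, i.e.\ the count from the pivot is $1+2(d-d_1-1)$, not $1+2(d-d_1-2)$) rather than treating the constants as routine.
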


\section{Conclusions}

Although the traditional Waxman graph has been widely used in some areas of social science, its mathematical features have to date not been studied in detail. Here, we find that as a network model, the Waxman graph has some deficiencies, particularly in its connectivity structure, and hence it may be more reasonable, and perhaps not more difficult, to replace this model with a model as proposed herein.

In addition, further study on the structure of a random distance graph as described herein, for which vertices are chosen randomly from the underlying metric space $X$, would be an interesting future direction for this research. Moreover, in the specific case studied in Theorem \ref{T:main}, it would be interesting to determine if a more nuanced choice of $\beta$, perhaps dependent on $n$, might yield the typical ``double-jump'' behavior for random graphs.

\section{Acknowledgements}
The authors are grateful to Ryan Dingman for his contribution to the initial stages of development of this project and its results, and to Toby Johnson for some useful discussion on the traditional Waxman model.

\bibliographystyle{siam}  
 \bibliography{bib_items}

\begin{thebibliography}{10}

\bibitem{AlonandSpencer}
{\sc N.~Alon and J.~Spencer}, {\em The Probabilistic Method}, John Wiley \&
  Sons, 3~ed., 2008.

\bibitem{avin2008distance}
{\sc C.~Avin}, {\em Distance graphs: From random geometric graphs to bernoulli
  graphs and between}, in Proceedings of the fifth international workshop on
  Foundations of mobile computing, ACM, 2008, pp.~71--78.

\bibitem{balister2008percolation}
{\sc P.~Balister, A.~Sarkar, and B.~Bollob{\'a}s}, {\em Percolation,
  connectivity, coverage and colouring of random geometric graphs}, in Handbook
  of large-scale random networks, Springer, 2008, pp.~117--142.

\bibitem{bollobas1982graph}
{\sc B.~Bollob{\'a}s}, {\em Graph theory}, Elsevier, 1982.

\bibitem{bollobas2007phase}
{\sc B.~Bollob{\'a}s, S.~Janson, and O.~Riordan}, {\em The phase transition in
  inhomogeneous random graphs}, Random Structures \& Algorithms, 31 (2007),
  pp.~3--122.

\bibitem{bollobas2012simple}
{\sc B.~Bollob{\'a}s and O.~Riordan}, {\em A simple branching process approach
  to the phase transition in $ g\_ $\{$n, p$\}$ $}, arXiv preprint
  arXiv:1207.6209,  (2012).

\bibitem{bradonjic2007giant}
{\sc M.~Bradonji{\'c}, A.~Hagberg, and A.~G. Percus}, {\em Giant component and
  connectivity in geographical threshold graphs}, in Algorithms and Models for
  the Web-Graph, Springer, 2007, pp.~209--216.

\bibitem{bradonjic2007wireless}
{\sc M.~Bradonjic and J.~Kong}, {\em Wireless ad hoc networks with tunable
  topology}, in Proceedings of the 45th Annual Allerton Conference on
  Communication, Control and Computing, 2007.

\bibitem{calvert1997modeling}
{\sc K.~L. Calvert, M.~B. Doar, and E.~W. Zegura}, {\em Modeling internet
  topology}, Communications Magazine, IEEE, 35 (1997), pp.~160--163.

\bibitem{cormen2009introduction}
{\sc T.~H. Cormen}, {\em Introduction to algorithms}, MIT press, 2009.

\bibitem{ding2011anatomy}
{\sc J.~Ding, J.~H. Kim, E.~Lubetzky, and Y.~Peres}, {\em Anatomy of a young
  giant component in the random graph}, Random Structures \& Algorithms, 39
  (2011), pp.~139--178.

\bibitem{easley2010networks}
{\sc D.~Easley and J.~Kleinberg}, {\em Networks, crowds, and markets: Reasoning
  about a highly connected world}, Cambridge University Press, 2010.

\bibitem{erdHos1959random}
{\sc P.~Erd{\H{o}}s and A.~R{\'e}nyi}, {\em On random graphs}, Publicationes
  Mathematicae Debrecen, 6 (1959), pp.~290--297.

\bibitem{erdHos1960evolution}
\leavevmode\vrule height 2pt depth -1.6pt width 23pt, {\em On the evolution of
  random graphs}, Publications of the Mathematical Institute of the Hungarian
  Academy of Sciences, 5 (1960), pp.~17--61.

\bibitem{faloutsos1999power}
{\sc M.~Faloutsos, P.~Faloutsos, and C.~Faloutsos}, {\em On power-law
  relationships of the internet topology}, in ACM SIGCOMM computer
  communication review, vol.~29, ACM, 1999, pp.~251--262.

\bibitem{friedgut1996every}
{\sc E.~Friedgut and G.~Kalai}, {\em Every monotone graph property has a sharp
  threshold}, Proceedings of the American mathematical Society, 124 (1996),
  pp.~2993--3002.

\bibitem{garfield1979its}
{\sc E.~Garfield}, {\em Its a small world after all}, Current contents,
  (1979), pp.~5--10.

\bibitem{humphries2008network}
{\sc M.~D. Humphries and K.~Gurney}, {\em Network Ôsmall-world-nessÕ: a
  quantitative method for determining canonical network equivalence}, PloS one,
  3 (2008), p.~e0002051.

\bibitem{janson1993birth}
{\sc S.~Janson, D.~E. Knuth, T.~{\L}uczak, and B.~Pittel}, {\em The birth of
  the giant component}, Random Structures \& Algorithms, 4 (1993),
  pp.~233--358.

\bibitem{janson2012phase}
{\sc S.~Janson and J.~Spencer}, {\em Phase transitions for modified
  erd{\H{o}}s--r{\'e}nyi processes}, Arkiv f{\"o}r matematik, 50 (2012),
  pp.~305--329.

\bibitem{kim2011modeling}
{\sc M.~Kim and J.~Leskovec}, {\em Modeling social networks with node
  attributes using the multiplicative attribute graph model}, arXiv preprint
  arXiv:1106.5053,  (2011).

\bibitem{kim2012multiplicative}
\leavevmode\vrule height 2pt depth -1.6pt width 23pt, {\em Multiplicative
  attribute graph model of real-world networks}, Internet Mathematics, 8
  (2012), pp.~113--160.

\bibitem{kleinberg2000navigation}
{\sc J.~M. Kleinberg}, {\em Navigation in a small world}, Nature, 406 (2000),
  pp.~845--845.

\bibitem{leskovec2010kronecker}
{\sc J.~Leskovec, D.~Chakrabarti, J.~Kleinberg, C.~Faloutsos, and
  Z.~Ghahramani}, {\em Kronecker graphs: An approach to modeling networks}, The
  Journal of Machine Learning Research, 11 (2010), pp.~985--1042.

\bibitem{mahadev1995threshold}
{\sc N.~V. Mahadev and U.~N. Peled}, {\em Threshold graphs and related topics},
  vol.~56, Elsevier, 1995.

\bibitem{mahdian2007stochastic}
{\sc M.~Mahdian and Y.~Xu}, {\em Stochastic kronecker graphs}, in Algorithms
  and models for the web-graph, Springer, 2007, pp.~179--186.

\bibitem{masuda2005geographical}
{\sc N.~Masuda, H.~Miwa, and N.~Konno}, {\em Geographical threshold graphs with
  small-world and scale-free properties}, Physical Review E, 71 (2005),
  p.~036108.

\bibitem{naldi2005connectivity}
{\sc M.~Naldi}, {\em Connectivity of waxman topology models}, Computer
  communications, 29 (2005), pp.~24--31.

\bibitem{nickel2007random}
{\sc C.~L.~M. Nickel}, {\em Random dot product graphs: A model for social
  networks}, vol.~68, 2007.

\bibitem{penrose2003random}
{\sc M.~Penrose}, {\em Random geometric graphs}, vol.~5, Oxford University
  Press Oxford, 2003.

\bibitem{radcliffe2013connectivity}
{\sc M.~Radcliffe and S.~J. Young}, {\em Connectivity and giant component of
  stochastic kronecker graphs}, arXiv preprint arXiv:1310.7652,  (2013).

\bibitem{scheinerman2010modeling}
{\sc E.~R. Scheinerman and K.~Tucker}, {\em Modeling graphs using dot product
  representations}, Computational Statistics, 25 (2010), pp.~1--16.

\bibitem{spencer2010giant}
{\sc J.~Spencer}, {\em The giant component: The golden anniversary}, Not. AMS,
  57 (2010), pp.~720--724.

\bibitem{van2001paths}
{\sc P.~Van~Mieghem}, {\em Paths in the simple random graph and the waxman
  graph}, Probability in the Engineering and Informational Sciences, 15 (2001),
  pp.~535--555.

\bibitem{waxman1988routing}
{\sc B.~M. Waxman}, {\em Routing of multipoint connections}, Selected Areas in
  Communications, IEEE Journal on, 6 (1988), pp.~1617--1622.

\bibitem{young2007random}
{\sc S.~J. Young and E.~R. Scheinerman}, {\em Random dot product graph models
  for social networks}, in Algorithms and models for the web-graph, Springer,
  2007, pp.~138--149.

\end{thebibliography}

\end{document}